\title{More Reduced Obstruction Theories}
\author{Timo Sch\"urg}
\address{Mathematisches Institut \\Universit\"at Bonn \\Endenicher Allee 60\\53115 Bonn\\Germany}
\email{timo\_schuerg@operamail.com}
\DeclareMathOperator{\tr}{tr}
\DeclareMathOperator{\at}{at}
\DeclareMathOperator{\Perf}{Perf}
\DeclareMathOperator{\St}{St}
\DeclareMathOperator{\Pic}{Pic}
\DeclareMathOperator{\dSt}{dSt}
\DeclareMathOperator{\cofib}{cofib}
\DeclareMathOperator{\QCoh}{QCoh}
\DeclareMathOperator{\fib}{fib}
\DeclareMathOperator{\Spec}{Spec}
\DeclareMathOperator{\der}{der}
\DeclareMathOperator{\Ext}{Ext}
\DeclareMathOperator{\Hom}{Hom}
\DeclareMathOperator{\Mor}{Mor}
\newcommand{\IC}{\mathbb{C}}
\newcommand{\IG}{\mathbb{G}}
\newcommand{\IR}{\mathbb{R}}
\newcommand{\IZ}{\mathbb{Z}}
\newcommand{\sO}{\mathcal{O}}
\newcommand{\RMor}{\IR \negthinspace \Mor}
\newcommand{\RPic}{\IR \negthinspace \Pic}
\newcommand{\RPerf}{\IR \negthinspace \Perf}
\newtheorem{thm}{Theorem}[section]
\newtheorem{lem}[thm]{Lemma}
\newtheorem{prop}[thm]{Proposition}
\newtheorem{cor}[thm]{Corollary}
\theoremstyle{definition}
\newtheorem{defn}[thm]{Definition}
\newtheorem{exmp}[thm]{Example}
\theoremstyle{remark}
\newtheorem{rem}[thm]{Remark}
\def\signed #1{{\leavevmode\unskip\nobreak\hfil\penalty50\hskip2em
    \hbox{}\nobreak\hfil #1%
  \parfillskip=0pt \finalhyphendemerits=0 \endgraf}}
\newsavebox\mybox
\begin{document}
\begin{abstract}
  We first develop a general formalism for globally removing factors 
  from an obstruction theory. We then apply this formalism to give a 
  construction of a reduced obstruction theory on the moduli space of 
  morphisms from a curve to a surface $f \colon C \to S$ in class 
  $\beta$ such that $H^1(C,f^* T_S) \overset{_{-}\cup \beta}{\to} 
  H^2(S,\sO _S)$ is surjective.  This condition appears in recent work 
  of Kool and Thomas.
\end{abstract}
\maketitle
%%%%%%%%%%
\section{Introduction}
%%%%%%%%%%

Due to the deformation invariance of Gromov-Witten invariants, smooth 
complex projective surfaces having deformations in a direction where the 
topological class $\beta \in H^{1,1}(S, \IC) \cap H^2(S,\IZ)$ does not 
stay of type $(1,1)$ have no Gromov-Witten invariants.

This can be fixed by introducing a \emph{reduced perfect obstruction 
  theory} for the moduli of stable maps. This obstruction theory is 
obtained by removing a factor from the usual obstruction theory, and is 
only invariant under deformations for which $\beta$ stays of the given 
type. This technique has been used extensively in the case of 
$K3$--surfaces by Maulik--Pandharipande \cite{maulik} and by 
Maulik--Pandharipande--Thomas \cite{mpt}. It has proven difficult though 
to show that the complex obtained by removing a factor from the standard 
obstruction theory indeed satisfies all requirements for being an 
obstruction theory. This is closely related to the problem of showing 
that realized obstructions lie in the kernel of a semi-regularity map.  
To treat other kinds of surfaces as well, an alternative approach has 
been introduced recently by Kool and Thomas \cite{thomas}.  There a 
complex having all formal properties expected from the reduced 
obstruction theory is defined using an \emph{algebraic twistor family}.  
The aim of this note is to construct a reduced obstruction theory in 
these cases directly without resorting to an algebraic twistor family. 

The basic idea how to remove a factor from an obstruction theory 
presented here is by mapping the moduli problem in question to a further 
smooth moduli problem, which nonetheless has a non-trivial obstruction 
theory. In the case of moduli of sheaves on a surface this was already 
used by Mukai \cite{mukai} and Artamkin \cite{artamkin}. The difficult 
part is to ensure that the obstruction theories of the two moduli 
problems are compatible.  In the context of formal moduli problems, it 
was observed by Manetti \cite{manetti} that this is automatic as soon as 
the morphism of formal moduli problems is induced by a morphism of 
differential graded Lie algebras. This technique allows us to show that 
obstructions to a formal moduli problem lie in the kernel of the induced 
map of the obstruction spaces.

Knowing the compatibilities of the obstruction theories of the formal 
moduli problems corresponding to closed points of the global moduli 
space is not enough though to globally remove a factor. As an example 
for the amount of calculations necessary, see the example of 
Donaldson--Thomas invariants \cite[Section 3]{casson}, or for the case 
of Pandharipande--Thomas invariants \cite{huybrechts}. In both cases the 
obstructions have to be considered with respect to a fixed determinant.  
It requires compatibilities of the obstructions not only over 
square-zero extensions of Artinian rings, but over square-zero 
extensions of aribtrary bases. 

In this note we study the case where a strong compatibility of the 
obstruction theories is available globally on the moduli space. As 
compatibility datum between the obstruction theories we require more 
than just commutativity in the derived category. Instead, we assume that 
the diagram of obstruction theories commutes up to homotopy in some 
higher categorical model. We show that in this case a factor of the 
obstruction theory can be removed globally on the entire moduli space in 
question. Using commutativity up to homotopy instead of commutativity in 
the derived category and cofiber sequences instead of exact triangles 
makes the necessary calculations a breeze.

This raises the question where such strong compatibilities can be found. 
Natural examples where such compatibilities up to homotopy are available 
come from derived algebraic geometry. In a previous work \cite{stv}, a 
morphism from the derived moduli space of stable maps to the derived 
Picard stack was introduced.  Assuming the compatibility of the 
obstruction theories obtained from this morphism, we show that a factor 
of the obstruction theory can be removed globally.

\subsection*{Conventions}
We have tried to adhere to the following conventions. We work throughout 
over an arbitrary base ring $k$, which in Section 4 becomes the field of 
complex numbers. We will denote the cotangent complex of a scheme, or 
more generally an Artin stack, over $k$ by $L_X$ instead of $L_{X/k}$.  
Contrary to what is common in algebraic geometry, we have used 
homological grading.  Finally we will denote by $\QCoh (X)$ the 
$\infty$-category of quasi-coherent complexes constructed by Lurie in 
\cite{DAGVIII}. The reason for employing this category instead of the 
derived category is that at certain points it is important to know why 
things are homotopic, and not only that they are homotopic. It also 
allows to carry out proofs as if one was only dealing with modules, and 
not with complexes. Recall that a \emph{cofiber sequence} in $\QCoh (X)$ 
consists of a sequence of morphisms $E \overset{f}{\to} F 
\overset{g}{\to} G$, a 2-simplex identifying the composition $fg$ with a 
morphism $E \overset{h}{\to} G$, and a nullhomotopy of $h$. The 
$\infty$-category $\QCoh (X)$ is equipped with a standard $t$-structure.  
We will use that the notion of Tor-amplitude behaves well with respect 
to this $t$-structure, i.e., if an object $E \in \QCoh (X)$ is of 
Tor-amplitude $\leq n$, then $E[m]$ is of Tor-amplitude $\leq n+m$. All 
details can be found in Lurie's volumes \cite{HigherAlgebra,DAGVIII}.

I would like to thank Richard Thomas for suggesting the subject of this 
short note and emphasizing the importance of Lemma \ref{lem:compzero}, 
Gabriele Vezzosi for helpful discussions, as well as Daniel Huybrechts 
for helpful comments.  The dependence of this note on material from 
\cite{stv} is obvious.

This work was supported by the SFB/TR 45 `Periods, Moduli Spaces and 
Arithmetic of Algebraic Varieties' of the DFG (German Research 
Foundation).
%%%%%%%%%%
\section{Removing Factors}
%%%%%%%%%%

We first introduce the geometric objects we wish to study. These are in 
general Artin stacks with a fixed 1-perfect obstruction theory. The 
terminology \emph{virtually smooth} for a pair of an Artin stack 
together with a fixed perfect obstruction theory was introduced by 
Fantechi--G\"ottsche in \cite{fg}.

\begin{defn}
  A pair $(X, \phi \colon E \to L_X)$ is a \emph{virtually smooth Artin  
    stack} if $X$ is an Artin stack locally of finite type over $k$, $E$ 
  is a perfect complex of Tor-amplitude $\leq 1$, and $\cofib(\phi) \in 
  \QCoh (X) _{\geq 2}$. The morphism $\phi$ will be referred to as the 
  \emph{obstruction theory}.
\end{defn}

If $X$ is a Deligne--Mumford stack, the morphism $\phi \colon E \to L_X$ 
in the above definition is a 1-perfect obstruction theory in the sense 
of Behrend--Fantechi \cite{behrend97}. We next define morphisms between 
such objects.

\begin{defn}
  A \emph{morphism of virtually smooth Artin stacks} is a pair 
  \[
    (f,\alpha) \colon (X, \phi \colon E \to L_X) \longrightarrow (Y, 
    \chi \colon F \to L_Y)
  \]
  where $f \colon X \to Y$ is a morphism of Artin stacks over $k$, and 
  $\alpha \colon f^*F \to E$ is a morphism of perfect complexes on $X$ 
  such that
  \[
    \xymatrix{
      f^* F \ar[r]^{\chi} \ar[d]_{\alpha} & f^* L_Y \ar[d] \\
      E \ar[r]^{\phi} \ar[r] & L_X
    }
  \]
  commutes in $\QCoh (X)$.
\end{defn}

\begin{rem}
  Recall that commuting in $\QCoh (X)$ means that we have fixed a 
  homotopy making the diagram commutative. This added information is 
  absolutely essential for all further computations.
\end{rem}

We will also need the notion of virtually smooth morphism.

\begin{defn}
  Let $(f,\alpha) \colon (X, \phi \colon E \to L_X) \longrightarrow (Y, 
  \chi \colon F \to L_Y)$ be a morphism of virtually smooth Artin  
  stacks.  Then $(f,\alpha)$ is a \emph{virtually smooth morphism} if 
  $\cofib (\alpha)$ is of Tor-amplitude $\leq 1$.
\end{defn}

\begin{rem}
  Note that a priori $\cofib (\alpha)$ is only of Tor-amplitude $\leq 
  2$.
\end{rem}

Derived algebraic geometry provides natural examples of virtually smooth 
Artin stacks and morphisms between these.

\begin{exmp}
  \label{exmp:der}
  Recall that a derived Artin  stack $X^d$ over $k$ is 
  \emph{quasi-smooth} if its cotangent complex $L_{X^d}$ is of 
  Tor-amplitude $\leq 1$ and its underlying Artin  stack $X:=t_0(X^d)$ 
  is locally of finite type over $k$. By the canonical inclusion $j_X 
  \colon X \hookrightarrow X^d$ we can obtain the structure of a 
  virtually smooth Artin  stack on $X$. The perfect obstruction theory 
  is given by the canonical morphism $\phi \colon j_X ^* L_{X^d} \to 
  L_X$.  Using the functoriality properties of cotangent complexes, 
  every morphism of quasi-smooth derived Artin  stacks gives rise to a 
  morphism of virtually smooth Artin  stacks.
\end{exmp}

To remove factors from obstruction theories we will make use of 
virtually smooth Artin stacks with some peculiar properties. We will be 
using virtually smooth Artin stacks $(Y, \chi \colon F \to L_Y)$ where 
the underlying Artin stack $Y$ itself is already smooth. This is 
contrary to the philosophy that spaces become smooth after deriving 
them, or that 1-perfect obstruction theories are only interesting on 
very singular spaces. On the contrary, we will have to find non-smooth 
derived versions of spaces that are smooth, or 1-perfect obstruction 
theories on smooth spaces.

\begin{defn}
  Let $(f,\alpha) \colon (X, \phi \colon E \to L_X) \longrightarrow (Y, 
  \chi \colon F \to L_Y)$ be a morphism of virtually smooth schemes. We 
  say that $(f, \alpha)$ is a \emph{reduction morphism} if $Y$ is 
  smooth.
\end{defn}

\begin{rem}
  For applications to virtual classes, $X$ will assumed to be a 
  Deligne--Mumford stack.
\end{rem}

Given a reduction map, we would like to define a new structure of 
virtually smooth Artin stack on $X$ such that the virtual dimension of 
$X$ increases. The factor we would like to remove from the obstruction 
theory $E$ is the pull-back to $X$ of the fiber of $\chi \colon F \to 
L_Y$.  In the following we will show that this is possible if the 
reduction morphism is virtually smooth. The key to removing a factor is 
the following lemma, which is true in much greater generality than we 
actually need. Note that we do not assume $(f,\alpha)$ either to be a 
reduction morphism or virtually smooth.

\begin{lem}
  \label{lem:compzero}
  Let $(f,\alpha) \colon (X, \phi \colon E \to L_X) \longrightarrow (Y, 
  \chi \colon F \to L_Y)$ be a morphism of virtually smooth schemes. Let 
  $K = \fib (\chi)$, and define $\beta$ to be the composition
  \[
    f^* K \overset{\gamma}{\longrightarrow} f^*F 
    \overset{\alpha}{\longrightarrow} E.
  \]
  Then the composition
  \[
    f^*K \overset{\beta}{\longrightarrow} E 
    \overset{\phi}\longrightarrow L_X
  \]
  is zero.
\end{lem}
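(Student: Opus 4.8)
The plan is to exploit the commuting square that is part of the data of the morphism $(f,\alpha)$ together with the fact that $f^*K \to f^*F$ becomes null after composing with $f^*\chi$, by construction of $K$ as a fiber. Concretely, the square
\[
  \xymatrix{
    f^* F \ar[r]^{f^*\chi} \ar[d]_{\alpha} & f^* L_Y \ar[d] \\
    E \ar[r]^{\phi} & L_X
  }
\]
commutes in $\QCoh(X)$, meaning we are handed a specific $2$-simplex $\sigma$ witnessing $\phi \circ \alpha \simeq (f^*L_Y \to L_X) \circ f^*\chi$. Precomposing everything with $\gamma \colon f^*K \to f^*F$, the homotopy $\sigma$ pulls back to a homotopy between $\phi \circ \alpha \circ \gamma = \phi \circ \beta$ and $(f^*L_Y \to L_X) \circ f^*\chi \circ \gamma$. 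So it suffices to produce a nullhomotopy of the right-hand composition, and for that it is enough to nullhomotope $f^*\chi \circ \gamma \colon f^*K \to f^*L_Y$ already at the level of $f^*F$'s neighbours.

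The key input is that $K = \fib(\chi)$ sits in a fiber sequence $K \overset{\gamma}{\to} F \overset{\chi}{\to} L_Y$ in $\QCoh(Y)$, which by definition comes equipped with a $2$-simplex and a nullhomotopy of $\chi \circ \gamma$. Since $f^*$ is an exact functor of stable $\infty$-categories, applying $f^*$ preserves this data: we obtain a distinguished nullhomotopy $\eta$ of $f^*\chi \circ \gamma \colon f^*K \to f^*L_Y$. Composing $\eta$ with the canonical map $f^*L_Y \to L_X$ gives a nullhomotopy of $(f^*L_Y \to L_X) \circ f^*\chi \circ \gamma$. Concatenating this with the pulled-back square homotopy $\gamma^*\sigma$ yields the desired nullhomotopy of $\phi \circ \beta$, which is precisely the assertion that the composition $f^*K \overset{\beta}{\to} E \overset{\phi}{\to} L_X$ is zero in $\QCoh(X)$.

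The only genuinely delicate point is bookkeeping of the homotopies: "zero" here does not merely mean that the composite represents $0$ in a Hom-group of the derived category, but that we have pinned down an explicit nullhomotopy, assembled by concatenating the exactness data of the fiber sequence (pulled back along $f^*$) with the commutativity data of the defining square of $(f,\alpha)$. I would spell this out by viewing the whole configuration as a diagram $\Delta^1 \times \Delta^1 \to \QCoh(X)$ built from the square, restricting its left edge along $\gamma$, and stacking it on top of the image under $f^*$ of the fiber sequence $K \to F \to L_Y$; a straightforward pasting of $2$-simplices then produces the nullhomotopy. This is the compatibility datum that all later computations in the paper will consume, so the emphasis should be on exhibiting the nullhomotopy canonically rather than merely asserting its existence.
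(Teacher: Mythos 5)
Your proof is correct and takes essentially the same approach as the paper: the paper stacks the pulled-back fiber sequence on top of the commutative square defining $(f,\alpha)$ to produce the nullhomotopy, which is precisely the pasting of $2$-simplices you describe. Your version simply spells out the concatenation more explicitly.
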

\begin{proof}
  By definition, we have a cofiber sequence $K \to F \to L_Y$, and this 
  remains a cofiber sequence after pulling to $X$. We thus have the 
  following commutative diagram on $X$
  \[
    \xymatrix{
      f^*K \ar[r] \ar[d]_{\gamma} & 0 \ar[d] \\
      f^*F \ar[r]^{\chi} \ar[d]_{\alpha} & f^*L_{Y} \ar[d] \\
      E \ar[r]^{\phi}  & L_{X}
    }
  \]
  which gives a homotopy from $\phi \circ \beta$ to zero.
\end{proof}

We can now define our candidate for a reduced obstruction theory. We let 
$E' := \cofib(\beta)$. By Lemma \ref{lem:compzero}, we have a 
well-defined morphism $\phi' \colon E' \to L_X$. Note that if we only 
knew the composition to be zero in the derived category this would not 
be sufficient to obtain a well-defined morphism.

\begin{thm}
  \label{thm:main}
  Let $(f,\alpha) \colon (X, \phi \colon E \to L_X) \longrightarrow (Y, 
  \chi \colon F \to L_Y)$ be a virtually smooth reduction map. Then
  \[
    (X, \phi' \colon E' \to L_{X})
  \]
  is a virtually smooth Deligne--Mumford stack.
\end{thm}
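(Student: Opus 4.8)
The plan is to verify, for the pair $(X,\phi'\colon E'\to L_X)$, the two non-formal conditions in the definition of a virtually smooth Artin stack: that $E'$ is a perfect complex of Tor-amplitude $\leq 1$, and that $\cofib(\phi')\in\QCoh(X)_{\geq 2}$; that $X$ is a Deligne--Mumford stack locally of finite type over $k$ is part of the hypothesis. Throughout I would use only formal properties of the stable $\infty$-category $\QCoh(X)$: the octahedral axiom, the behaviour of Tor-amplitude and connectivity under cofiber sequences and shifts as recalled in the conventions, and the fact that $f^*$ preserves perfect complexes, Tor-amplitude bounds, and connectivity (right $t$-exactness).

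First I would treat $E'$. Since $\beta$ is by definition the composition $f^*K\overset{\gamma}{\to}f^*F\overset{\alpha}{\to}E$, the octahedral axiom produces a cofiber sequence
\[
  \cofib(\gamma)\longrightarrow\cofib(\beta)\longrightarrow\cofib(\alpha).
\]
Here $\cofib(\gamma)=f^*\cofib(K\to F)\simeq f^*L_Y$, because $K=\fib(\chi)$ means exactly that $\cofib(K\to F)\simeq L_Y$; and since $(f,\alpha)$ is a reduction map the stack $Y$ is smooth, so $L_Y$ is a vector bundle and $f^*L_Y$ has Tor-amplitude $\leq 0$. On the other side, $(f,\alpha)$ being virtually smooth says precisely that $\cofib(\alpha)$ has Tor-amplitude $\leq 1$. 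Writing $E'=\cofib(\beta)\simeq\cofib\bigl(f^*L_Y\to\cofib(\alpha)\bigr)$ (rotation of the above cofiber sequence), or reading off the homology long exact sequence, one concludes that $E'$ has Tor-amplitude $\leq 1$; and $E'$ is perfect, being the cofiber of a morphism of perfect complexes.

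Next I would treat $\cofib(\phi')$. Rotating the cofiber sequence $K\to F\to L_Y$ identifies $\cofib(\chi)$ with $K[1]$, so virtual smoothness of $(Y,\chi)$, i.e.\ $\cofib(\chi)\in\QCoh(Y)_{\geq 2}$, says exactly that $K\in\QCoh(Y)_{\geq 1}$, whence $f^*K\in\QCoh(X)_{\geq 1}$. By construction $\phi'\colon E'=\cofib(\beta)\to L_X$ is obtained from $\phi$ together with the nullhomotopy of $\phi\circ\beta$ supplied by Lemma~\ref{lem:compzero} via the universal property of the cofiber; unwinding this, the data at hand is precisely a morphism of cofiber sequences in $\QCoh(X)$
\[
  \xymatrix{
    f^*K \ar[r]^{\beta} \ar[d] & E \ar[r] \ar[d]_{\phi} & E' \ar[d]^{\phi'} \\
    0 \ar[r] & L_X \ar@{=}[r] & L_X
  }
\]
Taking cofibers of the vertical arrows, which again yields a cofiber sequence, gives
\[
  (f^*K)[1]\longrightarrow\cofib(\phi)\longrightarrow\cofib(\phi').
\]
Since $f^*K\in\QCoh(X)_{\geq 1}$ we have $(f^*K)[1]\in\QCoh(X)_{\geq 2}$, and $\cofib(\phi)\in\QCoh(X)_{\geq 2}$ because $(X,\phi)$ is virtually smooth; as the cofiber of a morphism between objects of $\QCoh(X)_{\geq 2}$ again lies in $\QCoh(X)_{\geq 2}$, it follows that $\cofib(\phi')\in\QCoh(X)_{\geq 2}$, which completes the argument.

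The step I expect to require the most care is the passage from the nullhomotopy of Lemma~\ref{lem:compzero} to the morphism of cofiber sequences in the displayed diagram: one must check that the right-hand square genuinely commutes in $\QCoh(X)$, with filling homotopy the one produced by the universal property of $E'=\cofib(\beta)$ applied to that specific nullhomotopy, and that it is compatible with the (trivial) left-hand square so that the whole diagram really is a morphism of cofiber sequences. This is exactly the point where working in $\QCoh(X)$ rather than in the derived category is essential, as emphasised before Lemma~\ref{lem:compzero}; once this coherence is in place, taking cofibers of the columns is legitimate and the remaining verifications are the routine manipulations with Tor-amplitude, shifts, and the $t$-structure indicated above.
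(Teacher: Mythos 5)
Your proof is correct and takes essentially the same route as the paper's: the octahedral cofiber sequence $f^*L_Y\to E'\to\cofib(\alpha)$ handles perfectness and the Tor-amplitude bound, and a cofiber sequence relating $\cofib(\phi')$ to $f^*K$ and $\cofib(\phi)$ handles the connectivity condition (the paper phrases this last step dually, via $\fib(\phi')\simeq\cofib\bigl(f^*K\to\fib(\phi)\bigr)$, which is your sequence shifted by $[-1]$). One minor slip: rotating $f^*L_Y\to E'\to\cofib(\alpha)$ yields $\cofib(\alpha)\simeq\cofib(f^*L_Y\to E')$ rather than $E'\simeq\cofib\bigl(f^*L_Y\to\cofib(\alpha)\bigr)$, but your parenthetical alternative via the long exact sequence of homotopy sheaves is correct, so the argument is unaffected.
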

\begin{proof}
  We first show that $E'$ is perfect. Let $K$ as above denote $\fib 
  (\chi)$, so that we have a cofiber sequence
  \[
    K \longrightarrow F \overset{\chi}{\longrightarrow} L_Y.
  \]
  Now $F$ is perfect by assumption, and $L_Y$ is perfect since $Y$ is 
  smooth and locally of finite presentation. Since the property of being 
  perfect is stable under cofiber sequences, $K$ is perfect, and thus 
  $f^*K$ is perfect. This shows that $E'$ is the cofiber of a morphism 
  between perfect objects, and thus is perfect.

  We now want to show that $E'$ is of Tor-amplitude $\leq 1$. Since $Y$ 
  is smooth and $L_Y$ thus is of Tor-amplitude $\leq 0$, the above 
  cofiber sequence shows that $K$ is of Tor-amplitude $\leq 1$. It 
  follows that $f^*K$ is also of Tor-amplitude $\leq 1$. Let $\gamma$ 
  denote the morphism $f^*K \to f^*F$. By definition, the diagram
  \[
    \xymatrix{
      f^*K \ar[r]^{\gamma} \ar[dr]_{\beta}& f^* F \ar[d]^{\alpha} \\
       & E
     }
   \]
   commutes. This gives us a cofiber sequence $\cofib (\gamma) \to 
   \cofib (\beta) \to \cofib (\alpha)$. Since $E'=\cofib (\beta)$ and 
   $f^* L_Y = \cofib (\gamma)$, we have a cofiber sequence
   \[
     f^* L_Y \longrightarrow E' \longrightarrow \cofib (\alpha).
   \]
  Since we assumed $(f,\alpha)$ a virtually smooth morphism, $\cofib 
  (\alpha)$ is of Tor-dimension $\leq 1$. Again using that $Y$ is 
  smooth, it follows that $E'$ is of Tor-dimension $\leq 1$.
  
  It remains to show that $\cofib(\phi') \in \QCoh(X)_{\geq 2}$, or 
  equivalently that $\fib(\phi') \in \QCoh(X)_{\geq 1}$. Let $K' = \fib 
  (\phi)$. Since the composition $\phi \circ \beta$ factors over zero, 
  we obtain a morphism $\delta \colon  f^*K \to K'$, and we can identify 
  $\fib(\phi')$ with $\cofib (\delta)$. Since $ f^*K$ and $K'$ are both 
  in $\QCoh (X) _{\geq 1}$, the claim follows.
\end{proof}

\begin{rem}
  Since $\left( X, \phi' \colon E' \to L_X \right)$ is a virtually 
  smooth Artin stack this automatically poses the question if this 
  obstruction theory is induced by a natural structure of a derived 
  Artin stack on $X$.  Adding plenty of assumptions such a statement 
  indeed holds.  First of all, we have to assume that the perfect 
  obstrution theories $\left( E \to L_X \right)$ and $\left( F \to L_Y 
  \right)$ are induced by derived stacks $X^d$ and $Y^d$, and the 
  compatibility datum $\alpha$ is induced by a morphism $f^d \colon X^d 
  \to Y^d$.  Furthermore, we have to assume that the derived structure 
  on $Y^d$ splits as
  \[
    Y^d = Y \times Y^{\der}.
  \]
  The underling stack of $Y^{\der}$ is a point. Let $p\colon Y^d \to 
  Y^{\der}$ be the projection. The homotopy fiber product of the diagram
  \[
    \xymatrix{
      & X^d \ar[d]^{p \circ f^d} \\
      \Spec k \ar[r] & Y^{\der}
    }
  \]
  then yields the desired derived Artin stack. Such a splitting exists 
  for the derived Picard stack of a $K3$--surface. It is reasonable to 
  expect such a splitting to exist whenever $Y^d$ is a group stack with 
  smooth truncation.
\end{rem}
%%%%%%%%%%
\section{Application to Deformation Theory}
%%%%%%%%%%

In the following assume that $(X, \phi \colon E \to L_X)$ is a virtually 
smooth Deligne--Mumford stack. Given a reduction map $(f,\alpha) \colon 
(X, \phi \colon E \to L_X) \longrightarrow (Y, \chi \colon F \to L_Y)$, 
we can define a \emph{generalized semi-regularity map}. Given a morphism 
$p \colon T \to X$ where $T=\Spec (A)$ is an affine scheme, and a 
square-zero extension $T \hookrightarrow T'$ classified by a morphism 
$\eta \colon L_T \to M[1]$ for some $A$-module $M$, $p \colon T \to X$ 
extends to a morphism $T' \to X$ if and only if the element in 
$\Ext^1(p^*E,M)$ defined by the homotopy class of the composition
\[
  p^*E \overset{p^*\phi}{\longrightarrow} p^*L_X \longrightarrow L_T 
  \overset{\eta}{\longrightarrow} M[1]
\]
  vanishes. We define the generalized semi-regularity map to be the map
\[
  \Ext^1(p^*E,M) \longrightarrow \Ext ^1 (p^*f^*F,M)
\]
obtained by composition with $\alpha$. We will now show that realized 
obstructions lie in the kernel of the generalized semi-regularity map.  
We first give a definition of realized obstructions following 
Behrend--Fantechi \cite{behrend97}.

\begin{defn}
  \label{defn:RealObs}
  Let $(X, \phi \colon E \to L_X)$ be a virtually smooth Deligne--Mumford 
  stack,  and let $p \colon T \to X$ be a morphism with $T=\Spec (A)$.  
  Let $M$ be a $A$-module. A non-zero morphism $\alpha \colon p^*E \to 
  M[1]$ \emph{realizes an obstruction} if there exists a square-zero 
  extension $T \hookrightarrow T'$ classified by $\eta \colon L_T \to 
  M[1]$ such that
  \[
    \xymatrix{
      p^*E \ar[r] \ar[rrd]_{\alpha} & p^*L_X \ar[r] & L_T 
      \ar[d]^{\eta}\\
      & & M[1]
    }
  \]
  commutes.
\end{defn}

We can now show that obstructions that are actually realized always lie 
in the kernel of the generalized semi-regularity map.

\begin{prop}
  \label{prop:vanish}
  Let $(f,\alpha) \colon (X, \phi \colon E \to L_X) \longrightarrow (Y, 
  \chi \colon F \to L_Y)$ be a reduction morphism. Assume that $X$ is a 
  Deligne--Mumford stack, and let $p \colon T \to X$ be a morphism where 
  $T$ is an affine scheme. Then realized obstructions lie in the kernel 
  of the generalized semi-regularity map.
\end{prop}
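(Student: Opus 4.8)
The plan is to follow a realized obstruction through the generalized semi-regularity map, using the commutativity datum of $(f,\alpha)$ together with the smoothness of $Y$. To avoid a notational clash I denote a realized obstruction by $o \colon p^*E \to M[1]$ (rather than $\alpha$, as in Definition~\ref{defn:RealObs}). Let $T \hookrightarrow T'$ be the square-zero extension classified by $\eta \colon L_T \to M[1]$ witnessing $o$, so that $o$ is homotopic to the composite
\[
  p^*E \overset{p^*\phi}{\longrightarrow} p^*L_X \overset{\lambda}{\longrightarrow} L_T \overset{\eta}{\longrightarrow} M[1],
\]
with $\lambda$ the canonical map. By definition the image of $o$ under the generalized semi-regularity map is the homotopy class of $o \circ p^*\alpha \colon p^*f^*F \to M[1]$.

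First I would substitute the above factorization and invoke the commuting square
\[
  \xymatrix{
    f^*F \ar[r]^{\chi} \ar[d]_{\alpha} & f^*L_Y \ar[d]^{\mu} \\
    E \ar[r]^{\phi} & L_X
  }
\]
from the definition of a morphism of virtually smooth Artin stacks (the very input used in Lemma~\ref{lem:compzero}) to rewrite $p^*\phi \circ p^*\alpha$ as $p^*\mu \circ p^*\chi$, so that $o \circ p^*\alpha$ becomes homotopic to $\eta \circ (\lambda \circ p^*\mu) \circ p^*\chi$. Then, using the functoriality of the cotangent complex for $T \overset{p}{\to} X \overset{f}{\to} Y$, I would identify $\lambda \circ p^*\mu$ with the canonical morphism $(f\circ p)^*L_Y \to L_T$, whence $o \circ p^*\alpha$ factors up to homotopy through the composite $(f\circ p)^*L_Y \to L_T \overset{\eta}{\to} M[1]$. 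The homotopy class of this last composite is a class in $\Ext^1((f\circ p)^*L_Y, M)$ — indeed it is precisely the obstruction to extending $f\circ p \colon T \to Y$ over $T'$. Since $Y$ is smooth and locally of finite type, $L_Y$ is a locally free sheaf concentrated in degree $0$, so $(f\circ p)^*L_Y$ is a projective $A$-module in degree $0$, and therefore $\Ext^1((f\circ p)^*L_Y, M) = 0$. Hence the class vanishes, $o \circ p^*\alpha$ is nullhomotopic, and the image of $o$ under the generalized semi-regularity map is zero.

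The only genuinely delicate point is the identification of $\lambda \circ p^*\mu$ with the transition map of the composite $f\circ p$: this must hold compatibly with the homotopies recorded in $\QCoh$, not merely in the derived category. But since the semi-regularity map only remembers homotopy classes of morphisms into $M[1]$, the functoriality of $L_{(-)}$ supplied by Lurie's formalism is exactly what is needed. Everything else is formal, and the reduction hypothesis enters solely through the vanishing $\Ext^1((f\circ p)^*L_Y, M) = 0$, which is where smoothness of $Y$ is used.
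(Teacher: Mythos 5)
Your proof is correct and rests on the same two pillars as the paper's — the commuting square in the definition of a morphism of virtually smooth stacks, and the vanishing of $\Ext^1$ against the pullback of $L_Y$ by smoothness — but the organization is slightly different. You work directly on the $L_Y$ side: you rewrite $o \circ p^*\alpha$ as $\eta \circ \lambda \circ p^*\mu \circ p^*\chi$ and observe that $\eta\circ\lambda\circ p^*\mu$ defines a class in $\Ext^1\bigl((f\circ p)^*L_Y, M\bigr)=0$. The paper instead dualizes and works on the $K=\fib(\chi)$ side: it restricts along $p^*\gamma\colon p^*f^*K \to p^*f^*F$, shows this restriction map on $\Ext^1(-,M)$ is injective (using the vanishing of $\Ext^1(p^*f^*L_Y,M)$), and then invokes Lemma~\ref{lem:compzero} to conclude that the restriction $o\circ p^*\beta$ is nullhomotopic. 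These are two views of the same cofiber sequence $K\to F\to L_Y$, and neither is essentially shorter, though your version arguably avoids the detour through $K$ and does not need to mention $\Ext^2$ at all (the paper cites both $\Ext^1$ and $\Ext^2$ vanishing, but only the $\Ext^1$ vanishing is used).

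One small imprecision worth flagging: you justify $\Ext^1\bigl((f\circ p)^*L_Y, M\bigr)=0$ by saying that $L_Y$ is a locally free sheaf concentrated in degree $0$. That is true when $Y$ is a smooth scheme (or Deligne--Mumford stack), but in the paper's main application $Y=\Pic(S)$ is a smooth \emph{Artin} stack, whose cotangent complex has an extra term coming from infinitesimal automorphisms, so $L_Y$ is a two-term complex. The vanishing you need still holds — smoothness gives $L_Y$ Tor-amplitude $\leq 0$ (in the paper's homological grading), which forces $\Ext^i\bigl((f\circ p)^*L_Y,M\bigr)=0$ for $i\geq 1$ — but it is the Tor-amplitude bound, not degree-$0$ concentration, that does the work. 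With this small correction the argument goes through in the generality the paper needs.
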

\begin{proof}
  Since $Y$ is smooth, this allow us to conclude that 
  $\Ext^1(p^*f^*L_Y,M)$ and $\Ext^2(p^*f^*L_Y,M)$ are zero. Using the 
  pull-back of the cofiber sequence
  \[
    K \longrightarrow F \overset{\chi}\longrightarrow L_Y
  \]
  to $T$ we thus have an isomorphism
  \[
    \Ext^1(p^*f^*F,M) \simeq \Ext^1(p^*f^*K,M).
  \]
  Applying Lemma \ref{lem:compzero} the claim follows.
\end{proof}

\begin{rem}
  A reduction morphism is virtually smooth if its generalized 
  semi-regularity morphism is surjective.
\end{rem}

%%%%%%%%%%
\section{Application to moduli of maps}
%%%%%%%%%%%%%%%%%%%%%%

We now apply the formalism developed above in an example, working over 
$k=\IC$. The example we will be concerned with is the moduli space of 
maps from a fixed curve $C$ to a smooth projective complex surface $S$ 
satisfying the condition $c_1(\IR f_* \sO_C)=\beta$, where $\beta \in 
H^1(S,\Omega^1_S)$. We will denote this space by $\Mor_{\beta} (C,S)$.  
It is well-known that this space is actually a virtually smooth scheme, 
see Behrend--Fantechi \cite{behrend97}.  We will denote this space 
equipped with its standard obstruction theory by
\[
  \left( \Mor_{\beta} (C,S), \phi \colon E \to L_{\Mor_{\beta} 
      (C,S)}\right).
\]

To apply the results of \cite{stv}, it is important to note that the 
same structure of virtually smooth scheme can also be constructed using 
Example \ref{exmp:der}. To see this, denote by $i \colon \St_k \to \dSt 
_k$ the inclusion functor from stacks over $k$ to derived stacks over 
$k$. We can then define the derived moduli space of maps to be the 
derived scheme parametrizing morphisms in this larger category. We will 
denote this derived scheme by $ \RMor _{\beta}(C,S)$.

In order to remove a factor from the obstruction theory using the above 
formalism we have to find some virtually smooth Artin stack as 
comparison space. The natural candidate in this example is the Picard 
stack $\Pic (S):= \underline{\Hom}_{\St_k}(S,B \IG _m)$. As above, there 
again is a derived version of this stack, given by $\RPic (S) := 
\underline{\Hom}_{\dSt_k}(S,B \IG _m)$. Denote the canonical inclusion 
by $j \colon \Pic (S) \to \RPic (S)$. The virtually smooth space we will 
use as target for our potential reduction morphism is
\[
  \left( \Pic (S), \chi \colon  j^*L_{\RPic (S)} \to L_{\Pic (S)} 
  \right).
\]
Since the underlying Artin stack $\Pic (S)$ is smooth, this is an 
excellent candidate for a reduction map.

Finally, we have to give a map of virtually smooth schemes. In 
\cite{stv}, a map
\[
  \RMor_{\beta}(C,S) \overset{A_S}{\longrightarrow} \RPerf (S) 
  \overset{\det}{\longrightarrow} \RPic (S)
\]
is given. Using Example \ref{exmp:der}, we obtain a map of virtually 
smooth schemes
\begin{align*}
  (f,\alpha) \colon \left( \Mor_{\beta} (C,S), \phi \colon E \to 
    L_{\Mor_{\beta} (C,S)}\right)& \longrightarrow \\
   & \left( \Pic (S), \chi \colon  j^*L_{\RPic (S)} \to L_{\Pic (S)} 
   \right) .
\end{align*}

\begin{rem}
  The generalized semi-regularity map associated to $(f,\alpha)$ is just 
  the semi-regularity map for morphisms of Buchweitz--Flenner 
  \cite[Remark   7.24]{buchweitz}.
\end{rem} 

We can now define a new structure of virtually smooth scheme on 
$\Mor_{\beta}(C,S)$. Let $K:= \fib (\chi)$. Note that $K$ is non-trivial 
if and only if $H^2(S,\sO _S)$ is non-trivial. As above, we then have a 
morphism $\gamma \colon f^* K \to E$, and can define $E' := \cofib 
(\gamma)$ as candidate for a reduced obstruction theory.

\begin{cor}
  \label{cor:red}
  Assume that
  \begin{align*}
    (f,\alpha) \colon \left( \Mor_{\beta} (C,S), \phi \colon E \to 
      L_{\Mor_{\beta} (C,S)}\right)& \longrightarrow \\
     & \left( \Pic (S), \chi \colon  j^*L_{\RPic (S)} \to L_{\Pic (S)} 
     \right) .
  \end{align*}
  is virtually smooth. Then
  \[
    \left( \Mor_{\beta}(C,S), \phi' \colon E' \to L_{\Mor_{\beta}(C,S)} 
    \right)
  \]
  is a virtually smooth scheme.
\end{cor}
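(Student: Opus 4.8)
The plan is to recognize Corollary \ref{cor:red} as a direct special case of Theorem \ref{thm:main}, so the entire proof reduces to checking the hypotheses of that theorem for the specific morphism $(f,\alpha)$ constructed above. Theorem \ref{thm:main} requires that $(f,\alpha)$ be a \emph{virtually smooth reduction map} between virtually smooth schemes, and it produces exactly the pair $(\Mor_\beta(C,S), \phi' \colon E' \to L_{\Mor_\beta(C,S)})$ with $E' = \cofib(\beta)$; note that here $\beta = \alpha \circ \gamma$ with $\gamma \colon f^*K \to f^*F$, and since $K = \fib(\chi)$ maps to $F = j^*L_{\RPic(S)}$, the composite $f^*K \to f^*F \to E$ is precisely the map called $\gamma \colon f^*K \to E$ in the paragraph preceding the statement, so the $E'$ of the corollary agrees with the $E'$ of the theorem.

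So the steps are as follows. First I would observe that both source and target are genuine virtually smooth \emph{schemes}: $\Mor_\beta(C,S)$ is a scheme carrying its standard obstruction theory $\phi \colon E \to L_{\Mor_\beta(C,S)}$ (cited to Behrend--Fantechi), and $\Pic(S)$ with $\chi \colon j^*L_{\RPic(S)} \to L_{\Pic(S)}$ is virtually smooth because $\RPic(S)$ is quasi-smooth (its cotangent complex has Tor-amplitude $\leq 1$, being controlled by $R\Gamma(S,\sO_S)[1]$ up to a shift) and its truncation $\Pic(S)$ is smooth and locally of finite type — this is exactly the setup of Example \ref{exmp:der}. Second, I would note that $Y = \Pic(S)$ is smooth, which is precisely the definition of a reduction morphism. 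Third — and this is where the hypothesis of the corollary enters — I would invoke the assumption that $(f,\alpha)$ is virtually smooth, i.e. that $\cofib(\alpha)$ has Tor-amplitude $\leq 1$. With all hypotheses of Theorem \ref{thm:main} verified, the conclusion that $(\Mor_\beta(C,S), \phi' \colon E' \to L_{\Mor_\beta(C,S)})$ is a virtually smooth Deligne--Mumford stack follows immediately, and since $\Mor_\beta(C,S)$ is in fact a scheme, it is a virtually smooth scheme.

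The only genuine content beyond bookkeeping is making sure the derived-geometric inputs from \cite{stv} really do land in the framework of Example \ref{exmp:der}: namely that $\RMor_\beta(C,S)$ and $\RPic(S)$ are quasi-smooth derived stacks with the correct truncations, and that the composite $\RMor_\beta(C,S) \xrightarrow{A_S} \RPerf(S) \xrightarrow{\det} \RPic(S)$ induces on truncations the morphism $f$ together with the compatibility datum $\alpha$ on cotangent complexes, commuting in $\QCoh$ and not merely in the derived category. Once that identification is in place, Lemma \ref{lem:compzero} guarantees $\phi \circ \beta \simeq 0$ with a specified nullhomotopy, so $E' = \cofib(\beta)$ carries a well-defined $\phi' \colon E' \to L_{\Mor_\beta(C,S)}$, and Theorem \ref{thm:main} does the rest.

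I expect the main obstacle to be purely expository rather than mathematical: the hard part is not any estimate but rather confirming that the map of \cite{stv} genuinely upgrades to a morphism of virtually smooth schemes in the precise $\infty$-categorical sense demanded by Definition of a morphism of virtually smooth Artin stacks — that is, that the homotopy witnessing commutativity of the square relating $f^*F$, $f^*L_Y$, $E$, and $L_X$ is supplied by the functoriality of cotangent complexes applied to $f^d$, and is not something we must construct by hand. Granting the results of \cite{stv} as cited, this is automatic, and the corollary is then immediate from Theorem \ref{thm:main}.
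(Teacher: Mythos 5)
Your proposal is correct and matches what the paper intends: the corollary is stated without proof precisely because it is an immediate instance of Theorem~\ref{thm:main}, applied to the reduction morphism to $\Pic(S)$ supplied by \cite{stv} via Example~\ref{exmp:der}, with the virtual-smoothness of $(f,\alpha)$ taken as hypothesis. You also correctly flag the minor notational clash (the composite $f^*K \to E$ is called $\beta$ in Section~2 but $\gamma$ in Section~4, since $\beta$ has been reused for the curve class), which is the only point of potential confusion.
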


\begin{rem}
  Note that the two structures of virtually smooth scheme on 
  $\Mor_{\beta}(C,S)$ only differ in case $H^2(S,\sO _S)$ is non-zero.
\end{rem}

\begin{exmp}
  Assume that $S$ is a $K3$--surface. Then the morphism $(f,\alpha)$ is 
  virtually smooth for any class $\beta \neq 0$.
\end{exmp}

We finally want to state a condition ensuring that $(f, \alpha)$ is 
virtually smooth. This condition was identified by Kool and Thomas 
\cite{thomas} and provided the motivation for this work.

\begin{prop}
  Assume that
  \[
    H^{1}(S,T_S) \overset{\cup \beta}{\longrightarrow} H^2(S,\sO _S)
  \]
  is surjective. Then $(f,\alpha)$ is virtually smooth.
\end{prop}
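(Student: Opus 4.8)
The plan is to show directly that $\cofib(\alpha)$ is of Tor-amplitude $\leq 1$, using the hypothesis on the cup product together with the fact that all the complexes involved are sitting in a narrow range of cohomological degrees because $C$ is a curve and $S$ is a surface. By the remark following Proposition \ref{prop:vanish}, it is equivalent (and, I expect, cleanest) to show that the generalized semi-regularity map
\[
  \Ext^1(p^*E,M) \longrightarrow \Ext^1(p^*f^*F,M)
\]
is surjective for all affine $p \colon T \to \Mor_{\beta}(C,S)$ and all $A$-modules $M$; one then invokes that remark to conclude virtual smoothness. In fact it suffices to check surjectivity on geometric points, i.e.\ at closed points $p \colon \Spec \IC \to \Mor_{\beta}(C,S)$ with $M = \IC$, since Tor-amplitude of a perfect complex can be tested fibrewise, and $\cofib(\alpha)$ is perfect (it is the cofiber of a map of perfect complexes, $f^*F$ being perfect by smoothness of $\Pic(S)$ and $E$ by hypothesis).

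So fix a map $f_0 \colon C \to S$ corresponding to a closed point. The fiber of $E$ at this point is the standard deformation-obstruction complex of $\Mor_{\beta}(C,S)$, with $H^0$-part $\Hom(\cdot)$ and $H^{-1}$-part the obstruction space $H^1(C, f_0^* T_S)$. On the other side, the fiber of $F = j^*L_{\RPic(S)}$ at the image point in $\Pic(S)$ has cohomology computed by $H^*(S,\sO_S)$; concretely, since $\RPic(S) = \underline{\Hom}_{\dSt}(S, B\IG_m)$, its tangent complex at a line bundle is $\IR\Gamma(S,\sO_S)[1]$, so the relevant obstruction group $\Ext^1$ against $\IC$ is $H^2(S,\sO_S)$. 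The content of the remark before the Proposition identifies the composite $\Ext^1(p^*E,\IC) \to \Ext^1(p^*f^*F,\IC)$ with the Buchweitz--Flenner semi-regularity map for the morphism $f_0$, which is precisely the map on obstruction spaces
\[
  H^1(C, f_0^* T_S) \longrightarrow H^2(S,\sO_S)
\]
induced by $f_0^* T_S \to f_0^*\sO_S(\text{something})$ and pullback--integration along $f_0$; more transparently, via the identification $H^1(C,f_0^*T_S) \to H^1(C, f_0^*\Omega^1_S{}^\vee)$ and the pairing with $\beta \in H^1(S,\Omega^1_S)$, this map is exactly cup product with $\beta$. Hence surjectivity of the semi-regularity map at this point is literally the hypothesis that $H^1(S,T_S) \overset{\cup\beta}{\to} H^2(S,\sO_S)$ is surjective — once one knows the deformation $H^1(C,f_0^*T_S) \to H^2(S,\sO_S)$ factors through $H^1(S,T_S)$ via the differential of $f_0$, which is standard since the obstruction to deforming $f_0$ maps compatibly to the obstruction to deforming the line bundle $\det \IR f_{0*}\sO_C$, and the latter depends only on the image of the Kodaira--Spencer class.

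The main obstacle I anticipate is the bookkeeping in the previous paragraph: carefully matching the Buchweitz--Flenner semi-regularity map with the map of obstruction spaces coming from $(f,\alpha)$, and then matching \emph{that} with honest cup product against $\beta$, so that one can literally read off surjectivity from the stated hypothesis. This requires knowing that the map $A_S \colon \RMor_{\beta}(C,S) \to \RPerf(S)$ from \cite{stv} induces on tangent complexes the expected map $\IR\Gamma(C,f_0^*T_S) \to \IR\Gamma(S, \IR f_{0*}\sO_C \otimes (\IR f_{0*}\sO_C)^\vee)[1] \to \IR\Gamma(S,\sO_S)[1]$, and then chasing through $\det$; I would quote the relevant compatibility from \cite{stv} and \cite{buchweitz} rather than reprove it. Everything else — perfectness, the reduction of the Tor-amplitude claim to fibres, and the final appeal to the remark after Proposition \ref{prop:vanish} — is formal given the results already established in Sections 2 and 3.
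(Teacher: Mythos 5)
Your overall strategy matches the paper's: both reduce to checking on $k$-points (fibrewise Tor-amplitude for a perfect $\cofib(\alpha)$), both reformulate the needed condition as surjectivity of the dual of $\pi_1(p^*\alpha)$ (equivalently the generalized semi-regularity map at that point), and both aim to conclude via the hypothesis on cup product with $\beta$.

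However, there is a genuine gap in the final step. You assert that the map $H^1(C,f_0^*T_S) \to H^2(S,\sO_S)$ \emph{factors through} $H^1(S,T_S)$, i.e.\ that the semi-regularity map is a composite $H^1(C,f_0^*T_S) \to H^1(S,T_S) \overset{\cup\beta}{\to} H^2(S,\sO_S)$. This has the factorization backwards. There is no natural map $H^1(C,f_0^*T_S)\to H^1(S,T_S)$; the natural map goes the other way (restriction $H^1(S,T_S)\to H^1(C,f_0^*T_S)$). And even if such a factorization existed, surjectivity of $\cup\beta$ alone would not imply surjectivity of the composite — you would additionally need surjectivity of the unnatural first map. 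The correct and usable statement, which is what the paper establishes (via the factorization of $c_1$ through the Atiyah class and trace, plus the result from the appendix of \cite{stv}), is that $\cup\beta$ \emph{itself} factors as
\[
  H^1(S,T_S) \longrightarrow H^1(C,f_0^*T_S) \longrightarrow H^2(S,\sO_S),
\]
where the second arrow is the dual of $\pi_1(p^*\alpha)$. Surjectivity of the composite then immediately forces surjectivity of the second map, which is exactly what is needed. Your justification for the (incorrect) factorization — that the obstruction ``depends only on the image of the Kodaira--Spencer class'' — does not identify a map in the required direction and does not repair the argument. A minor additional slip: $f^*F$ is perfect because $\RPic(S)$ is quasi-smooth (so $L_{\RPic(S)}$ is perfect), not because the underived $\Pic(S)$ is smooth; smoothness of $\Pic(S)$ is what makes it a valid \emph{target} for a reduction morphism.
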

\begin{proof}
  It suffices to prove the statement on $k$-points. We thus have to show 
  that at any point $p \colon \Spec k \to  \Mor_{\beta}(C,S)$ the 
  morphism
  \[
    \pi_1(p^* \alpha) \colon  \pi_1(p^* L_{\IR \Pic (S)}) 
    \longrightarrow \pi_1(p^*E)
  \]
  is injective. Equivalently, we have to show that the dual of 
  $\pi_1(p^* \alpha)$ is surjective.
  
  Let $g \colon C \to S$ be the morphism corresponding to $p$. Recall 
  from Illusie \cite[Chapitre V]{illusie} or explicitly from 
  \cite{huybrechts} that for any perfect complex the first Chern class 
  factors as composition of the Atiyah class and the trace-map. Let 
  $E=\IR g_* \sO_C$. Thus, for any class $\alpha \in H^1(S,T_S)$ the 
  operation of cup-product with $\beta=c_1(E)$ factors as
  \[
    \xymatrix{
      H^1(S,T_S) \ar[dr]^{_{-} \cup \beta} \ar[d]_{_{-} \cup \at_E} & \\
      \Ext^2_S(E,E) \ar[r]_{\tr} & H^2(S,\sO_S).
    }
  \]
  Now in \cite[Appendix]{stv} it is shown that $_{-} \cup \at_E$ factors 
  as
  \[
    \xymatrix{
      & H^1(S,T_S) \ar[d]^{_{-} \cup \at_E} \ar[dl] \\
      H^1(C,g^* T_S) \ar[r]_{T_{A_S,g}} & \Ext_S^2(E,E).
    }
  \]
  Here $T_{A_S,g}$ is the tangent to
  \[
  A_S \colon \RMor_{\beta}(C,S) \longrightarrow \RPerf (S)
  \]
  at the point $p$. Piecing the two diagrams together, we arrive at a 
  commutative diagram
  \[
    \xymatrix{
      &  H^1(S,T_S) \ar[dr]^{_{-} \cup \beta} \ar[d]_{_{-} \cup \at_E} 
      \ar[dl]& \\
      H^1(C,g^* T_S) \ar[r]_{T_{A_S}} & \Ext^2_S(E,E) \ar[r]_{\tr} & 
      H^2(S,\sO_S).
    }
  \]
  Since the bottom row is the dual of $\pi_1(p^*\alpha)$ and by 
  assumption
  \[
    H^{1}(S,T_S) \overset{\cup \beta}{\longrightarrow} H^2(S,\sO _S)
  \]
  is surjective, the claim follows.
\end{proof}

\begin{rem}
  Behrend and Fantechi in \cite{oberwolfach} suggested removing a factor 
  of $H^0(X, \Omega^2 _X)$ from the obstruction theory of the moduli 
  space of stable maps to an irreducible complex symplectic variety of 
  dimension $n$ to perform refined curve counts. The formalism developed 
  here applies as soon as one has an appropriate target for a reduction 
  morphism. Promising candidates are the derived version of the 
  intermediate Jacobian $J^p_X$ with $p=n-1$ constructed recently by 
  Pridham \cite{pridham} and Iacono--Manetti \cite{iacono}. More 
  generally, this should work for any variety for which an analogue of 
  the surjectivity of cup-product with $\beta$ holds.
\end{rem}

%%%%%%%%%%
\bibliographystyle{amsplain}
\bibliography{bibliography}

\providecommand{\bysame}{\leavevmode\hbox to3em{\hrulefill}\thinspace}
\providecommand{\MR}{\relax\ifhmode\unskip\space\fi MR }
% \MRhref is called by the amsart/book/proc definition of \MR.
\providecommand{\MRhref}[2]{%
  \href{http://www.ams.org/mathscinet-getitem?mr=#1}{#2}
}
\providecommand{\href}[2]{#2}
\begin{thebibliography}{10}

\bibitem{artamkin}
I.~V. Artamkin, \emph{On the deformation of sheaves}, Izv. Akad. Nauk SSSR Ser.
  Mat. \textbf{52} (1988), no.~3, 660--665, 672. \MR{954302 (89g:14004)}

\bibitem{behrend97}
K.~Behrend and B.~Fantechi, \emph{The intrinsic normal cone}, Inventiones
  Mathematicae \textbf{128} (1997), no.~1, 45--88.

\bibitem{oberwolfach}
\bysame, \emph{Tagungsbericht 24/1998}, Mathematisches Forschungsinstitut
  Oberwolfach (1998).

\bibitem{buchweitz}
R.~Buchweitz and H.~Flenner, \emph{A semiregularity map for modules and
  applications to deformations}, Compositio Math. \textbf{137} (2003), no.~2,
  135--210. \MR{1985003 (2005f:14017)}

\bibitem{fg}
B.~Fantechi and L.~G\"ottsche, \emph{{Riemann-Roch} theorems and elliptic genus
  for virtually smooth schemes}, Geometry \& Topology \textbf{14} (2010),
  no.~1, 83--115.

\bibitem{huybrechts}
D.~Huybrechts and R.P. Thomas, \emph{Deformation-obstruction theory for
  complexes via {A}tiyah and {K}odaira-{S}pencer classes}, Math. Ann.
  \textbf{346} (2010), no.~3, 545--569. \MR{2578562 (2011b:14030)}

\bibitem{iacono}
D.~{Iacono} and M.~{Manetti}, \emph{{Semiregularity and obstructions of
  complete intersections}}, ArXiv e-prints (2011).

\bibitem{illusie}
L.~Illusie, \emph{Complexe cotangent et d\'eformations. {I}}, Lecture Notes in
  Mathematics, Vol. 239, Springer-Verlag, Berlin, 1971. \MR{0491680 (58
  \#10886a)}

\bibitem{thomas}
M.~{Kool} and R.~P. {Thomas}, \emph{{Reduced classes and curve counting on
  surfaces I: theory}}, ArXiv e-prints (2011).

\bibitem{DAGVIII}
J.~Lurie, \emph{Quasi-coherent sheaves and {T}annaka duality theorems},
  (2011).

\bibitem{HigherAlgebra}
\bysame, \emph{Higher algebra},  (2012).

\bibitem{manetti}
M.~Manetti, \emph{Differential graded lie algebras and formal deformation
  theory}, Algebraic Geometry: Seattle 2005, Summer Research Institute on
  Algebraic Geometry, July {25-August} 12, 2005, University of Washington,
  Seattle \textbf{80} (2009), 785--810.

\bibitem{maulik}
D.~{Maulik} and R.~{Pandharipande}, \emph{{Gromov-Witten theory and
  Noether-Lefschetz theory}}, ArXiv e-prints (2007).

\bibitem{mpt}
D.~Maulik, R.~Pandharipande, and R.P. Thomas, \emph{Curves on {K3} surfaces and
  modular forms}, Journal of Topology \textbf{3} (2010), no.~4, 937.

\bibitem{mukai}
S.~Mukai, \emph{Symplectic structure of the moduli space of sheaves on an
  abelian or {$K3$} surface}, Invent. Math. \textbf{77} (1984), no.~1,
  101--116. \MR{751133 (85j:14016)}

\bibitem{pridham}
J.~P. {Pridham}, \emph{{Semiregularity as a consequence of Goodwillie's
  theorem}}, ArXiv e-prints (2012).

\bibitem{stv}
T.~{Sch{\"u}rg}, B.~{To{\"e}n}, and G.~{Vezzosi}, \emph{{Derived algebraic
  geometry, determinants of perfect complexes, and applications to obstruction
  theories for maps and complexes}}, ArXiv e-prints (2011).

\bibitem{casson}
R.~P. Thomas, \emph{A holomorphic {C}asson invariant for {C}alabi-{Y}au
  3-folds, and bundles on {$K3$} fibrations}, J. Differential Geom. \textbf{54}
  (2000), no.~2, 367--438. \MR{1818182 (2002b:14049)}

\end{thebibliography}
\end{document}